\documentclass{article}

\usepackage{amsmath}
\usepackage{amssymb}
\usepackage[margin=1in]{geometry}
\usepackage{dsfont}
\usepackage{amsthm}
\usepackage{enumerate}
\usepackage{color}
\usepackage[normalem]{ulem}
\usepackage[backend=biber, style=numeric-comp, doi=false,isbn=false,url=false, giveninits=true, date=year, maxbibnames=5]{biblatex}

\newcommand{\M}[2]{M_{#1\times #1}(\mathds{#2})}
\newcommand{\Msymm}[2]{M_{#1\times #1}^{\mathrm{symm}}(\mathds{#2})}
\newcommand{\sip}[1]{\langle#1\rangle}
\newcommand{\norm}[1]{\left\lVert#1\right\rVert}
\newcommand{\A}{\mathcal{A}}

\newcommand{\N}{\mathds{N}}

\newcommand{\C}{\mathds{C}}
\newcommand{\R}{\mathds{R}}

\newcommand{\Q}{\mathcal{Q}}
\newcommand{\diag}{\mathrm{diag}}
\newcommand{\Tr}{\mathrm{Tr}}

\newcommand{\blindfootnote}[1]{{\renewcommand\thefootnote{}\footnotetext{#1}}}

\numberwithin{equation}{section}

\newtheorem{theorem}{Theorem}[section]
\newtheorem{prop}[theorem]{Proposition}
\newtheorem*{prop*}{Proposition}
\newtheorem{lemma}[theorem]{Lemma}
\newtheorem{corollary}[theorem]{Corollary}

\theoremstyle{definition}
\newtheorem{definition}[theorem]{Definition}
\newtheorem*{remark}{Remark}

\newtheorem{problem}{Problem}

\newtheorem{example}[theorem]{Example}

\addbibresource{ref.bib}

\title{On the existence of derivations as square roots of generators of state-symmetric quantum Markov semigroups\blindfootnote{\emph{MSC2020}: 46L57. \emph{Keywords}: quantum Markov semigroups, derivations, states.\\
MV is supported by the NWO Vidi grant VI.Vidi.192.018 `Non-commutative harmonic analysis and rigidity of operator
algebras'.}}
\author{Matthijs Vernooij}

\AtEndDocument{\bigskip{\footnotesize%
  \textsc{Delft University of Technology, Faculty EEMCS/DIAM, P.O.Box 5031, 2600 GA Delft, The Netherlands} \par  
  \textit{E-mail address}: \texttt{m.n.a.vernooij@tudelft.nl}
}}

\begin{document}
\maketitle

\begin{abstract}
	Cipriani and Sauvageot have shown that for any $L^2$-generator $L^{(2)}$ of a tracially symmetric quantum Markov semigroup on a C*-algebra $\A$ there exists a densely defined derivation $\delta$ from $\A$ to a Hilbert bimodule $H$ such that $L^{(2)}=\delta^*\circ \overline{\delta}$. Here we show that this construction of a derivation can in general not be generalised to quantum Markov semigroups that are symmetric with respect to a non-tracial state. In particular we show that all derivations to Hilbert bimodules can be assumed to have a concrete form, and then we use this form to show that in the finite-dimensional case the existence of such a derivation is equivalent to the existence of a positive matrix solution of a system of linear equations. We solve this system of linear equations for concrete examples using Mathematica to complete the proof.
\end{abstract}

\section{Introduction}
In 1976 Lindblad argued that a quantum Markov semigroup (QMS) is a good way to describe irreversible processes in a quantum system \cite{Lindblad1976}. Since then QMSs have been extensively researched. Here we will focus on one line of investigation, which is due to Cipriani and Sauvageot \cite{CIPRIANI200378}. Let $(\mathcal{P}_t)_{t\geq0}$ be a QMS on a unital C*-algebra $\A$ with trace $\tau$, $L$ the generator of the QMS, and assume that each $\mathcal{P}_t$ is symmetric with respect to $\tau$. In this case $L$ can also be extended to a closed, densely defined, nonnegative self-adjoint operator on $L^2(\A,\tau)$, which we denote by $L^{(2)}$. Then Cipriani and Sauvageot showed that there exists a densely defined derivation $\delta$ from $\A$ to some Hilbert bimodule $H$ such that we have $L^{(2)}=\delta^*\circ\overline{\delta}$ when we consider $\delta$ as an operator on $L^2(\A,\tau)$ and use this to define the adjoint $\delta^*$ of $\delta$ \cite{CIPRIANI200378}. This construction of a derivation is widely used in non-commutative potential theory \cite{CIPRIANI200378}, non-commutative optimal transport theory \cite{Wirth2018,Hornshaw2020}, deformation-rigidity theory for von Neumann algebras \cite{Caspers2021}, and investigations into the decoherence time of quantum systems \cite{bardet2017}.

Most of these papers are only able to treat the tracially symmetric case \cite{CIPRIANI200378,Wirth2018,Hornshaw2020,bardet2017}. However, many von Neumann algebras have natural non-tracial states. In fact, type III von Neumann algebras do not even have any semifinite faithful normal trace \cite{Takesaki2002I}, which is a requirement for the construction of Cipriani and Sauvageot. However, these algebras have several physical applications, both in quantum field theory \cite{Yngvason2005} and in quantum statistical mechanics for infinite systems \cite{Bratteli1981}. Additional interest in type III factors can be found in free probability by looking at free Araki-Woods factors \cite{Shlyakhtenko1997} or in quantum group theory, for instance by considering free orthogonal quantum groups \cite{Daele1996}. However, even in other types of von Neumann algebras one might want to consider a non-tracial state, for example when studying generators that commute with the modular automorphism group with respect to that state, which is a trivial condition for tracial states \cite{CARLEN20171810}. 

This prompts us to pose the following problem, which has already been mentioned by Caspers \cite[p. 279]{Caspers2021} and by Skalski and Viselter \cite[p. 62]{Skalski2019}:
\begin{problem}[Abstract version]\label{problem-abstr}
	Is it possible to generalise the construction by Cipriani and Sauvageot of a derivation that is the square root of a generator of a QMS to non-tracial states?
\end{problem}

There are two things that need to be specified to make Problem \ref{problem-abstr} concrete. Let $\rho$ be a state on $\A$. There are many natural inner products on $\A$ based on $\rho$ depending on the embedding of $\A$ in its $L^2$ space, as was observed by Kosaki \cite{Kosaki1984}, and we need to choose which one we want to use. One class of inner products is conveniently described by Carlen and Maas \cite{CARLEN20171810} as the inner products given by 
\begin{equation*}
	\sip{A,B}_s=\rho(\sigma_{si}^{\rho}(B^*)A),
\end{equation*}
for $s\in[0,1]$, with $\sigma_t^{\rho}$ the modular automorphism group with respect to $\rho$. $\sip{\cdot,\cdot}_0$ and $\sip{\cdot,\cdot}_{\frac{1}{2}}$ are called the GNS or KMS inner product, respecively. We also need to decide which requirements we put on the codomain $H$ of the derivation. It is clear that $H$ needs to be an $\A-\A$ bimodule. In the case that we do not impose stronger conditions on $H$, Carlen and Maas have shown that when $\A$ is finite-dimensional and $(\mathcal{P}_t)_{t\geq 0}$ is a QMS with detailed balance, i.e. it is symmetric with respect to the GNS inner product, then there exists a derivation $\delta$ from $\A$ with the KMS inner product to an $\A-\A$ bimodule $H$ with an inner product such that the generator of $(\mathcal{P}_t)_{t\geq0}$ is given by $\delta^*\circ\delta$ \cite{Carlen2020}. However, this bimodule is not a $*$-bimodule, which is desirable when one wants to go to infinite dimensions. Therefore, we want to require that the codomain of the derivation is in fact a Hilbert bimodule, as was the case in the construction of Cipriani and Sauvageot, leading us to the following concrete version of Problem \ref{problem-abstr}:
\begin{problem}[Concrete version]\label{problem-concr}
	Let $s\in[0,1]$. For a unital C*-algebra $\A$ with faithful, lower semicontinuous state $\rho$ and a QMS $(\mathcal{P}_t)_{t\geq 0}$ on $\A$ with generator $L$, symmetric with respect to the GNS inner product, do there always exist a Hilbert $\A-\A$ bimodule $H$ and a densely defined derivation $\delta:\A\rightarrow H$ such that $L^{(2)}=\delta^*\circ\overline{\delta}$ when we consider $\delta$ as an operator on $L^2(\A,\sip{\cdot,\cdot}_s)$?
\end{problem}
 
Here, we will provide a negative answer to Problem \ref{problem-concr} for $s=0$ and $s=\frac{1}{2}$. Additionally, the method we use for $s=0$ and $s=\frac{1}{2}$ can be applied directly to the other cases, and we hypothesise that the answer is negative for all $s\in[0,1]$.

We conclude the introduction by giving a short overview of the proof. We will first show that we only have to consider Hilbert spaces of a concrete form in Section \ref{sec-gen-form}. Next, we will prove Theorem \ref{thm-sys-lin-eqs}, which states that for finite-dimensional unital C*-algebras the question whether such a derivation exists can be transformed to the solvability of a system of linear equations, with the additional requirement that the resulting solution matrix is positive. Together this gives a method to check for any finite-dimensional C*-algebra $\A$ with state $\rho$, inner product $\sip{\cdot,\cdot}_s$ and generator of a QMS $L$ whether there exists a derivation $\delta$ to a Hilbert bimodule such that $L^{(2)}=\delta^*\circ\delta$. We conclude by applying these methods to GNS-self-adjoint generators of QMSs on $\M{2}{C}$ and $\M{3}{C}$ with the GNS or KMS inner product in Section \ref{sec-generators}. We show that these derivations sometimes, but not always, exist by using Mathematica to solve the systems of linear equations that appear. This shows that Problem \ref{problem-concr} has a negative answer in the $\M{3}{C}$ case. A more detailed analysis of the $\M{3}{C}$ case with the GNS inner product shows that states and corresponding generators for which such a derivation exist are rare. This is made precise in Example \ref{examp-detailed}.

After completion of this work, a preprint by Wirth became available, which shows that the construction of Cipriani and Sauvageot can be generalised if one generalises the derivation to a so called \emph{twisted derivation} \cite{Wirth2022Diff}. Our results can be seen as proof that this generalisation to twisted derivations cannot be avoided and suggest that the result by Wirth is the best one can hope to achieve.
\\
\\
\emph{Acknowledgements:} The author would like to thank Martijn Caspers for carefully reading this manuscript and for helping place this work in context.

\section{Preliminaries}
We start this work by agreeing on some nomenclature. First, we will assume that all of our sesquilinear forms are linear in the first coordinate and antilinear in the second.  Next, we will precisely define the concepts \emph{bimodule}, \emph{$*$-bimodule} and \emph{Hilbert bimodule}, since their distinction is essential to this work.

\begin{definition}
	Let $\A$ be an algebra. An $\A-\A$ bimodule is a triple $(H,L,R)$ of a vector space $H$ and two algebra homomorphisms $L$ and $R$ from $\A$ and $\A^{\circ}$, respectively, to the linear maps on $H$ such that $L(\A)$ and $R(\A)$ commute. We will denote the left and right action as left and right multiplication, respectively, so for all $A\in\A$ and $x\in H$ we have $Ax=L(A)x$ and $xA=R(A)x$.
\end{definition}
\begin{definition}
	Let $\A$ be a $*$-algebra. An $\A-\A$ $*$-bimodule is an $\A-\A$ bimodule $H$ with an positive sesquilinear form $\sip{\cdot,\cdot}$ such that for all $x,y\in H$ and $A\in\A$ we have $\sip{Ax,y}=\sip{x,A^*y}$ and $\sip{xA,y}=\sip{x,yA^*}$ and $L(A)$ and $R(A)$ are bounded. If $H$ is a Hilbert space, then we call this a Hilbert $\A-\A$ bimodule.
\end{definition}
We will often just call something a ($*$-)bimodule if the algebra is clear. Subsequently, we include the definition of a derivation.
\begin{definition}
 Let $\A$ be an algebra and let $H$ be an $\A-\A$ bimodule. A linear map $\delta:\A\rightarrow H$ is a derivation if it satisfies the product rule $\delta(AB)=A\delta(B)+\delta(A)B$ for all $A,B\in\A$. 
\end{definition}

Lastly, we will give a precise definition of a quantum Markov semigroup on a unital C*-algebra $\A$ and its generator, and we will define when we call a QMS symmetric. We finish by defining the extension of a $\rho$-symmetric QMS and its generator to $L^2(\A,\sip{\cdot,\cdot}_s)$. 
\begin{definition}
	A quantum Markov semigroup (QMS) on a unital C*-algebra $\A$ is a strongly continuous one-parameter semigroup of bounded linear maps $(\mathcal{P}_t)_{t\geq 0}$ on $\A$ such that for each $t\geq 0$ we have that $\mathcal{P}_t$ is completely positive and $\mathcal{P}_t(1)=1$. If $(\mathcal{P}_t)_{t\geq 0}$ is a QMS, then the closed, densely defined operator $L$ on $\A$ satisfying $\mathcal{P}_t=e^{-tL}$ is called the generator of this QMS. 
\end{definition}
\begin{definition}
Let $(\mathcal{P}_t)_{t\geq 0}$ be a QMS on a C*-algebra $\A$ with inner product $\sip{\cdot,\cdot}$. We call $(\mathcal{P}_t)_{t\geq 0}$ symmetric if each $\mathcal{P}_t$ is symmetric with respect to $\sip{\cdot,\cdot}$. If $\rho$ is a state on $\A$, then we call $(\mathcal{P}_t)_{t\geq 0}$ symmetric with respect to $\rho$ or $\rho$-symmetric if it is symmetric with respect to the GNS inner product for $\rho$.
\end{definition}

\begin{remark}
	The GNS inner product for $\rho$ in the previous definition corresponds to the embedding $x\mapsto xd_{\rho}^{\frac{1}{2}}$ of $\A$ into $L^2(\A,\sip{\cdot,\cdot}_0)$ \cite{Kossakowski1977}, where $d_{\rho}$ is the $L^1$ element corresponding to $\rho$ \cite{Terp1981}. This is different from the KMS embedding $x\mapsto \sigma_{-\frac{i}{4}}(x)d_{\rho}^{\frac{1}{2}}$, which, Cipriani explains, can be seen as the most natural one \cite{Cipriani1997}. However, symmetry, i.e. self-adjointness as an operator on the $L^2$ space, imposes a weaker condition on the QMS if one uses the KMS embedding, since GNS symmetry implies that the QMS commutes with the modular automorphism group \cite[Proposition 2.2]{Wirth2022Christ} and, consequently, that the QMS is KMS symmetric. Here we want to prove a no-go theorem, and therefore we want to use the inner product that imposes the strongest condition on the QMS, and then show that one still cannot always find the desired derivation.
\end{remark}

\begin{definition}
	The extension of a $\rho$-symmetric QMS $\mathcal{P}_t$ to $L^2(\A,\sip{\cdot,\cdot}_s)$ is given by
	\begin{equation*}
		\mathcal{P}_t^{(2)}(d_{\rho}^{\frac{s}{2}}xd_{\rho}^{\frac{1-s}{2}})=d_{\rho}^{\frac{s}{2}}\mathcal{P}_t(x)d_{\rho}^{\frac{1-s}{2}}.
	\end{equation*}
	This extension works for all $s\in[0,1]$ \cite[Section 5]{Haagerup2010}. The $L^2$-generator $L^{(2)}$ is defined by
	\begin{equation*}
		L^{(2)}\eta=\lim_{t\rightarrow 0}\frac{\eta-\mathcal{P}_t^{(2)}(\eta)}{t},
	\end{equation*}
	and the domain is given by the $\eta$ for which this limit exists \cite[Section 4]{Cipriani1997}.
\end{definition}
\begin{remark}
	If $\mathcal{P}_t$ is a $\rho$-symmetric QMS with generator $L$, then $\mathcal{P}_t^{(2)}=e^{-tL^{(2)}}$ \cite{Cipriani1997}, so $L^{(2)}$ is an extension of $L$. 
\end{remark}

\begin{remark}
	If $\A$ is finite dimensional, we will just write $\mathcal{P}_t$ and $L$ in all situations, since we do not need extensions in that case.
\end{remark}

\section{The general form of a derivation} \label{sec-gen-form}
In this section we aim to show that any derivation from a $*$-algebra $\A$ to a Hilbert space can be viewed as a derivation from $\A$ to $\A\otimes \A$ with an appropriate left-multiplication. This bimodule structure on $\A\otimes\A$ and derivation were also used in the work of Cipriani and Sauvageot \cite{CIPRIANI200378}. More precisely, we will prove the following theorem.
\begin{theorem}\label{thm-gen-deriv-form}
	Let $\A$ be a $*$-algebra and $H$ a Hilbert $\A-\A$ bimodule with nondegenerate right action. Suppose that $\delta: \A\rightarrow H$ is a derivation. Then there exist a positive sesquilinear form $\sip{\cdot,\cdot}$ and a $*$-bimodule structure on $\A\otimes \A$, an isometric bimodule homomorphism $\phi:\A\otimes \A\rightarrow H$, which extends to $\overline{\A\otimes\A/\sip{\cdot,\cdot}}$, and a derivation $\partial: \A\rightarrow \overline{\A\otimes\A/\sip{\cdot,\cdot}}$ such that $\delta=\phi\circ \partial$ and $\A\delta(\A)\A\subset\phi(\A\otimes\A)$. The bimodule structure on $\A\otimes \A$ is given by
	\begin{equation*}
		A(B\otimes C)D=AB\otimes CD-A\otimes BCD
	\end{equation*}
	for all $A,B,C,D\in\A$. If $\A$ is unital, then $\partial$ maps via $\A\otimes\A$ and is given by
	\begin{equation*}
		\partial(A)=[A\otimes 1]\in \A\otimes\A/\sip{\cdot,\cdot},
	\end{equation*}
	where $[\cdot]$ denotes the equivalence class in the quotient.
\end{theorem}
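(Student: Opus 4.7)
The plan is to build $\phi: \A \otimes \A \to H$ directly from $\delta$, and then pull both the bimodule structure and the inner product back through $\phi$. Concretely, I would extend the bilinear map $(A, B) \mapsto \delta(A) B$ to a linear map $\phi$ on the algebraic tensor product, and define the positive sesquilinear form on $\A \otimes \A$ by $\sip{\xi, \eta} := \sip{\phi(\xi), \phi(\eta)}_H$. This form is manifestly positive semidefinite, $\phi$ becomes an isometry by construction, and $\phi$ descends to an isometric linear map $\A \otimes \A / \sip{\cdot,\cdot} \to H$ that extends to the completion.

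For the bimodule structure, I would check that the proposed operation $A(B \otimes C)D = AB \otimes CD - A \otimes BCD$ is intertwined by $\phi$ with the given action on $H$. The Leibniz rule yields
\[
\phi(AB \otimes CD - A \otimes BCD) = \delta(AB) CD - \delta(A) BCD = A \delta(B) CD = A\, \phi(B \otimes C)\, D,
\]
so $\phi$ is a bimodule homomorphism. The $*$-bimodule identities $\sip{A\xi, \eta} = \sip{\xi, A^*\eta}$ and $\sip{\xi A, \eta} = \sip{\xi, \eta A^*}$ on $\A \otimes \A$ then follow by pulling back from $H$, and boundedness of the left and right actions transfers to the completion. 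In the unital case I would set $\partial(A) := [A \otimes 1]$; immediately $\phi \circ \partial(A) = \delta(A)$, and the identity $A\delta(B)C = \phi(AB \otimes C - A \otimes BC)$ gives the inclusion $\A \delta(\A) \A \subset \phi(\A \otimes \A)$. A direct computation shows $A \partial(B) + \partial(A) B - \partial(AB) = -[1 \otimes AB]$, and since $\delta(1) = 0$ one has $\sip{1 \otimes AB, 1 \otimes AB} = \sip{\delta(1) AB, \delta(1) AB}_H = 0$, so the Leibniz rule holds modulo the null space.

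The main obstacle I anticipate is the non-unital case, where $\partial(A) = [A \otimes 1]$ is no longer literally defined and $\partial$ must be constructed directly in the completion $\overline{\A \otimes \A / \sip{\cdot,\cdot}}$. My plan is to exploit nondegeneracy of the right action on $H$ to select a net $(e_\lambda) \subset \A$ that acts as an approximate identity on $\delta(\A) \subset H$, and then set $\partial(A) := \lim_\lambda [A \otimes e_\lambda]$. The net is Cauchy in the completion since
\[
\sip{A \otimes (e_\lambda - e_\mu), A \otimes (e_\lambda - e_\mu)} = \sip{\delta(A)(e_\lambda - e_\mu), \delta(A)(e_\lambda - e_\mu)}_H \longrightarrow 0,
\]
and both the identity $\phi \circ \partial = \delta$ and the Leibniz rule then pass to the limit by continuity of $\phi$ and of the bimodule actions on the completion.
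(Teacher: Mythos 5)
Your construction of $\phi$, the pulled-back sesquilinear form, the bimodule-homomorphism check, and the unital case all follow essentially the same route as the paper. One small slip in the unital case: with the stated actions one has $A(B\otimes 1)+(A\otimes 1)B=AB\otimes 1-A\otimes B+A\otimes B=AB\otimes 1$ exactly, so the Leibniz rule holds on the nose in $\A\otimes\A$ and your claimed defect $-[1\otimes AB]$ does not actually appear (though, as you observe, such a term would be null anyway since $\phi(1\otimes AB)=\delta(1)AB=0$). This does not affect correctness.

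The genuine gap is in the non-unital case. You propose to extract from nondegeneracy of the right action a net $(e_\lambda)\subset\A$ with $\delta(A)e_\lambda\to\delta(A)$ in $H$ and to set $\partial(A)=\lim_\lambda[A\otimes e_\lambda]$. But $\A$ is only a $*$-algebra: it need not contain anything like an approximate identity, and nondegeneracy here is the separating condition ($xD=0$ for all $D\in\A$ implies $x=0$), which is far weaker than the existence of such a net; even the density version $\overline{H\A}=H$ would not produce one without C*-structure. Worse, your formula would give $\phi(\partial(A))=\lim_\lambda\delta(A)e_\lambda$, so the identity $\phi\circ\partial=\delta$ is precisely the approximate-identity property you have not established --- and a priori $\delta(A)$ need not even lie in the closed span $\overline{H_0}$ of $\delta(\A)\A$, which is where $\phi$ takes its values. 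The paper avoids this by defining $\partial(A)$ via Riesz representation: the functional $[B\otimes C]\mapsto\sip{\delta(A),\delta(B)C}_H$ is bounded by Cauchy--Schwarz, hence represented by a unique vector $\partial(A)$ in the completion; one then checks that $(\phi(\partial(A))-\delta(A))D$ lies in $\overline{H_0}$ and is orthogonal to it, so it vanishes for every $D$, and only at that point is nondegeneracy (in the separating sense) invoked to conclude $\phi(\partial(A))=\delta(A)$. You would need to replace your limit construction with an argument of this kind for the non-unital case to go through.
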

\begin{remark}
	Note that for unital $*$-algebras $\A$ the nondegeneracy condition is equivalent to the right action being unital, as can be seen by a short calculation.
\end{remark}
\begin{proof}
	Let $H_0\subset H$ be the linear subspace generated by $\delta(\A)\A$. In other words, $H_0$ is given by
	\begin{equation*}\label{H0-2}
		H_0=\{\sum_{i=1}^n\delta(A_i)B_i|n\in \N,A_i,B_i\in \A, i\leq n\}.
	\end{equation*}
	Due to the Leibniz rule, we have that 
	\begin{equation*}
		\delta(AB)C-\delta(A)BC=A\delta(B)C=A\delta(BC)-AB\delta(C),
	\end{equation*}
	showing that $H_0$ contains $\A\delta(\A)\A$. Consequently, $H_0$ is an $\A-\A$ bimodule.
	
	First, we define the bimodule $K$ as the vector space $\A\otimes \A$ with the left and right multiplication given by
	\begin{equation*}
		A(B\otimes C)=AB\otimes C-A\otimes BC\text{ and } (A\otimes B)C=A\otimes BC
	\end{equation*}
	for all $A,B,C\in \A$, respectively. The right multiplication is clearly associative, and for the left multiplication the computation
	\begin{align*}
		A(B(C\otimes D))=A(BC\otimes D-B\otimes CD)=ABC\otimes D&-A\otimes BCD-AB\otimes CD+A\otimes BCD\nonumber\\
		&=ABC\otimes D-AB\otimes CD=(AB)(C\otimes D)
	\end{align*}
	shows that it is indeed associative. This allows us to define the linear map $\phi: K\rightarrow H_0$ by $\phi(A\otimes B)=\delta(A)B$. This is clearly a right $\A$-module homomorphism, and the fact that it is a left $\A$-module homomorphism follows from
	\begin{equation*}
		\phi(A(B\otimes C))=\phi(AB\otimes C-A\otimes BC)=\delta(AB)C-\delta(A)BC=A\delta(B)C=A\phi(B\otimes C).
	\end{equation*}
	$\phi$ is surjective by definition of $H_0$. Since $\A\delta(\A)\A\subset H_0$, we know that $\A\delta(\A)\A\subset \phi(\A\otimes \A)$.
	
	The next step is to define a positive sesquilinear form $\sip{\cdot,\cdot}_K$ on $K$, which we do by setting
	\begin{equation*}
		\sip{A\otimes B,C\otimes D}_K=\sip{\delta(A)B,\delta(C)D}_H=\sip{\phi(A\otimes B),\phi(C\otimes D)}_H.
	\end{equation*}
	Because $\phi$ is a bimodule homomorphism, it also follows that the left and right multiplications on $K$ are $*$-homomorphisms. It is immediately clear that $\phi$ is isometric by the definition of the positive sesquilinear form on $K$. Because $\phi$ is isometric, we find that $\ker(\phi)=\{x\in K|\sip{x,x}_K=0\}$. Since $\phi$ is a bimodule homomorphism, we know that this kernel is a subbimodule. Therefore, $K'=K/\sip{\cdot,\cdot}$ is a bimodule and $\phi$ is an isometric bimodule isomorphism from $K'$ to $H_0$. Consequently, we can extend $\phi$ from the completion of $K'$, $\overline{K'}$, to $\overline{H_0}$. Because $L(A)$ and $R(A)$ are bounded for all $A\in \A$, we know that $\overline{K'}$ and $\overline{H_0}$ are Hilbert bimodules. By the same reasoning we see that $K$ is a $*$-bimodule.
	
	Lastly, we need to define the derivation $\partial:\A\rightarrow \overline{\A\otimes\A/\sip{\cdot,\cdot}}$. First suppose that $\A$ is unital. In this case we will define $\partial: \A\rightarrow\A\otimes\A/\sip{\cdot,\cdot}$ by $\partial(A)=[A\otimes 1]$, which then automatically also maps into $\overline{\A\otimes\A/\sip{\cdot,\cdot}}$. This is a derivation, because
	\begin{equation*}
		A(B\otimes 1)+(A\otimes 1)B=AB\otimes 1-A\otimes B+A\otimes B=AB\otimes 1
	\end{equation*}
	and taking the equivalence class on both sides gives the Leibniz rule. Furthermore, we see that $\delta=\phi\circ\partial$ indeed holds. This concludes the proof for the unital case.
	
	Now we turn to the non-unital case. To define our derivation, we look at the densely defined linear functional $l_A$ on $\overline{K'}$ given by
	\begin{equation*}
		l_A([B\otimes C])=\sip{\delta(A),\delta(B)C}_H
	\end{equation*}
	for all $B,C\in\A$. We know that $\norm{l_A}\leq \sip{\delta(A),\delta(A)}_H^{\frac{1}{2}}$ by the Cauchy-Schwarz inequality, so $l_A$ can be extended to a bounded linear functional. Consequently, there exists a unique $x\in \overline{K'}$ such that 
	\begin{equation*}
		l_A([B\otimes C])=\sip{x,[B\otimes C]}_{K'}
	\end{equation*}
	holds for all $B,C\in\A$, since $\overline{K'}$ is a Hilbert space. This allows us to define $\partial(A)$ for each $A\in\A$ as the unique element in $\overline{K'}$ such that
	\begin{equation*}
		\sip{\partial(A),[B\otimes C]}_{K'}=\sip{\delta(A),\delta(B)C}_H
	\end{equation*}
	holds for all $B,C\in\A$. We can now do the required computation to show that $\partial$ is in fact a derivation. Here we will use that both $H$ and $\overline{K'}$ are $*$-bimodules and that $\phi$ is a bimodule homomorphism. Let $A,B,C,D\in\A$ be arbitrary. Then we have
	\begin{align*}
		\sip{A\partial(B)+\partial(A)B,[C\otimes D]}_{K'}&=\sip{A\partial(B),[C\otimes D]}_{K'}+\sip{\partial(A)B,[C\otimes D]}_{K'}\\
		&=\sip{\partial(B),A^*([C\otimes D])}_{K'}+\sip{\partial(A),[C\otimes DB^*]}_{K'}\\
		&=\sip{\delta(B),\phi(A^*([C\otimes D]))}_H+\sip{\delta(A),\phi([C\otimes DB^*])}_H\\
		&=\sip{\delta(B),A^*\phi([C\otimes D])}_H+\sip{\delta(A),\phi([C\otimes D])B^*}_H\\
		&=\sip{A\delta(B),\phi([C\otimes D])}_H+\sip{\delta(A)B,\phi([C\otimes D])}_H\\
		&=\sip{\delta(AB),\phi([C\otimes D])}_H\\
		&=\sip{\partial(AB),[C\otimes D]}_{K'},
	\end{align*}
	which shows that $\partial(AB)=\partial(A)B+A\partial(B)$. So we have shown that $\partial$ is a derivation. All that remains is to prove that $\delta=\phi\circ\partial$. For all $A,B,C\in\A$ we have
	\begin{equation*}
		\sip{\phi(\partial(A)),\delta(B)C}_H=\sip{\phi(\partial(A)),\phi([B\otimes C])}_H=\sip{\partial(A),[B\otimes C]}_{K'}=\sip{\delta(A),\delta(B)C}_H.
	\end{equation*}
	Consequently, for all $A,B,C\in\A$ we see that 
	\begin{equation*}
		\sip{\phi(\partial(A))-\delta(A),\delta(B)C}_H=0.
	\end{equation*}
	But then we also have for all $A,B,C,D\in \A$ that 
	\begin{equation*}
		\sip{(\phi(\partial(A))-\delta(A))D^*,\delta(B)C}_H=\sip{\phi(\partial(A))-\delta(A),\delta(B)CD}_H=0,
	\end{equation*}
	which shows that for any $x\in \overline{H_0}$ and $A,D\in\A$ we have
	\begin{equation*}
		\sip{(\phi(\partial(A))-\delta(A))D,x}_H=0.
	\end{equation*}
	Since $\delta(A)D$ is contained in $\overline{H_0}$ for all $A,D\in\A$, we have
	\begin{equation*}
		\sip{(\phi(\partial(A))-\delta(A))D,(\phi(\partial(A))-\delta(A))D}_H=0.
	\end{equation*}
	Therefore we can conclude that $(\phi(\partial(A))-\delta(A))D=0$ for all $A,D\in\A$. But we know that the right action is nondegenerate, so this proves that $\phi(\partial(A))=\delta(A)$, which is what we wanted to show.
\end{proof}
\begin{remark}
	From now on we will refer to $\A\otimes \A$ with the bimodule structure given by
	\begin{equation*}
		A(B\otimes C)D=AB\otimes CD-A\otimes BCD
	\end{equation*}
	for all $A,B,C,D\in\A$ as the canonical bimodule $\A\otimes\A$.
\end{remark}
\begin{corollary}\label{corr-gen-deriv-f}
Let $\A$ be a unital $*$-algebra and $f:\A\times\A\rightarrow \C$ a sesquilinear map. There exists a derivation $\delta: \A\rightarrow H$ from $\A$ to a Hilbert bimodule $H$ with unital right action such that $\sip{\delta(A),\delta(B)}=f(A,B)$ for all $A,B\in\A$ if and only if there exist a positive sesquilinear form $\sip{\cdot,\cdot}_{\A\otimes\A}$ on the canonical bimodule $\A\otimes \A$ such that $\A\otimes \A$ is a $*$-bimodule and $\sip{A\otimes 1,B\otimes 1}_{\A\otimes\A}=f(A,B)$ holds for all $A,B\in\A$.
\end{corollary}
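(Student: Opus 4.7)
The plan is to derive the corollary as a fairly direct consequence of Theorem \ref{thm-gen-deriv-form}, with one direction being essentially an application of the theorem and the other direction an explicit construction using the canonical bimodule.

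For the forward implication, given a derivation $\delta: \A \to H$ with $\sip{\delta(A), \delta(B)}_H = f(A,B)$, I would note that the unital right action hypothesis ensures nondegeneracy (as observed in the remark after Theorem \ref{thm-gen-deriv-form}), so the theorem applies. It yields a positive sesquilinear form $\sip{\cdot,\cdot}$ and $*$-bimodule structure on $\A \otimes \A$, an isometric bimodule homomorphism $\phi$ from the completion of $\A\otimes\A/\sip{\cdot,\cdot}$ to $H$, and the derivation $\partial(A) = [A \otimes 1]$ with $\delta = \phi \circ \partial$. Since $\phi$ is isometric, we directly compute
\begin{equation*}
    \sip{A \otimes 1, B \otimes 1}_{\A \otimes \A} = \sip{[A \otimes 1], [B \otimes 1]}_{\A\otimes\A/\sip{\cdot,\cdot}} = \sip{\phi(\partial(A)), \phi(\partial(B))}_H = \sip{\delta(A), \delta(B)}_H = f(A,B).
\end{equation*}

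For the backward implication, given such a form, I would build the Hilbert bimodule $H := \overline{\A \otimes \A / \sip{\cdot,\cdot}_{\A\otimes\A}}$ by quotienting out the null space and completing. Boundedness of the left and right actions is part of the $*$-bimodule assumption, so both actions extend to $H$, giving a Hilbert $\A$-$\A$ bimodule. The right action is unital because in the canonical bimodule $(A \otimes B) \cdot 1 = A \otimes B$. I would then define $\partial: \A \to H$ by $\partial(A) = [A \otimes 1]$ and verify the Leibniz rule by the short computation
\begin{equation*}
    A \partial(B) + \partial(A) B = [AB \otimes 1 - A \otimes B] + [A \otimes B] = [AB \otimes 1] = \partial(AB),
\end{equation*}
using the definition of the canonical bimodule action. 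The desired inner product identity then holds tautologically by the assumption on the form.

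There is no real obstacle in this argument; the content of the corollary is that Theorem \ref{thm-gen-deriv-form} can be phrased as an equivalence between the existence of a derivation and the existence of a positive sesquilinear $*$-bimodule form on the canonical bimodule. The only thing to be slightly careful about is that the form $\sip{\cdot,\cdot}_{\A\otimes\A}$ in the backward direction may have a nontrivial null space, so one must pass to the quotient and completion before one has an actual Hilbert bimodule; this is completely analogous to the construction of $K'$ inside the proof of Theorem \ref{thm-gen-deriv-form}.
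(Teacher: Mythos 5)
Your proposal is correct and follows essentially the same route as the paper: the forward direction is read off from Theorem \ref{thm-gen-deriv-form} (using the isometry of $\phi$ and $\delta=\phi\circ\partial$), and the converse is the quotient-and-complete construction on the canonical bimodule, exactly as in the paper's proof. The extra details you supply (unitality of the right action on the completion, the Leibniz computation for $\partial(A)=[A\otimes 1]$) are consistent with the arguments already carried out inside the proof of Theorem \ref{thm-gen-deriv-form}.
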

\begin{proof}
	The only if direction follows from Theorem \ref{thm-gen-deriv-form}. The converse direction follows by completing $\A\otimes \A/\sip{\cdot,\cdot}$. This gives a bimodule because $\{x\in \A\otimes \A|\sip{x,x}=0\}=\{x\in\A\otimes\A|\sip{x,y}=0\ \forall y\in \A\otimes \A\}$ is a subbimodule and the left and right actions extend to the completion because they are bounded. 
\end{proof}

\section{Existence of derivation as solution of system of linear equations}
Our interest in the existence of derivations comes from the question whether a derivation exists that is the square root of a generator $L$ of a quantum Markov semigroup. This means that we are looking for a derivation $\delta$ from a $*$-algebra $\A$ to a Hilbert bimodule $H$ such that $\sip{\delta(A),\delta(B)}_H=\sip{L^{\frac{1}{2}}(A),L^{\frac{1}{2}}(B)}_{\A}$ for all $A,B\in\A$. Now Theorem \ref{thm-gen-deriv-form} allows us to consider only $*$-bimodules of a specific form. This will be essential to formulate a procedure to either find such a derivation or show that it does not exist. 
\begin{theorem}\label{thm-sys-lin-eqs}
	Let $\A$ be a unital finite-dimensional $*$-algebra with dimension $m$ and let $f:\A\times\A\rightarrow \C$ be a sesquilinear map. Fix an isomorphism $\psi:\A\otimes\A\rightarrow \C^{m^2}$ and a basis $\Q=\{Q_1,\dots Q_m\}$ of $\A$. Then the following are equivalent:
	\begin{enumerate}
		\item There exists a positive $m^2\times m^2$ matrix $X$ that is a solution to the system of equations given by
		\begin{align}
			&\{\psi(Q_{i_4}^*\otimes Q_{i_5}^*)^*X\psi(Q_{i_1}(Q_{i_2}\otimes Q_{i_3}))-\psi(Q_{i_1}^*(Q_{i_4}^*\otimes Q_{i_5}^*))^*X\psi(Q_{i_2}\otimes Q_{i_3})=0|1\leq i_1,\dots,i_5\leq m\}\nonumber\\
			&\cup \{\psi(Q_{i_4}^*\otimes Q_{i_5}^*)^*X\psi((Q_{i_2}\otimes Q_{i_3})Q_{i_1})-\psi((Q_{i_4}^*\otimes Q_{i_5}^*)Q_{i_1}^*)^*X\psi(Q_{i_2}\otimes Q_{i_3})=0|1\leq i_1,\dots,i_5\leq m\}\nonumber\\
			&\cup \{\psi(Q_{i_2}^*\otimes 1)^*X\psi(Q_{i_1}\otimes 1)-f(Q_{i_1},Q_{i_2}^*)=0|1\leq i_1,i_2\leq m\}\label{eq-sys-lin-eqs},
		\end{align}
		which is linear in $X$.
		\item There exists a derivation $\delta$, given by $\delta(A)=A\otimes 1$, to the canonical bimodule $\A\otimes \A$ and a positive sesquilinear form $\sip{\cdot,\cdot}_{\A\otimes\A}$ on $\A\otimes \A$ such that $\A\otimes\A$ with this positive sesquilinear form is a $*$-bimodule and for all $A,B\in\A$ we have $\sip{\delta(A),\delta(B)}_{\A\otimes\A}=f(A,B)$.
	\end{enumerate}
\end{theorem}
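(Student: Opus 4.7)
The plan is to directly combine Corollary~\ref{corr-gen-deriv-f} with a coordinate description of sesquilinear forms on $\A\otimes\A$ in terms of $m^2\times m^2$ matrices, and to translate the remaining conditions into the linear system in~\eqref{eq-sys-lin-eqs}.

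First I would apply Corollary~\ref{corr-gen-deriv-f}: statement (2) is equivalent to the existence of a positive sesquilinear form $\sip{\cdot,\cdot}_{\A\otimes\A}$ on the canonical bimodule $\A\otimes\A$ that turns it into a $*$-bimodule and satisfies $\sip{A\otimes 1,B\otimes 1}_{\A\otimes\A}=f(A,B)$ for all $A,B\in\A$. The boundedness of the left and right multiplications required in the definition of a $*$-bimodule is automatic, since $\A$, and hence $\A\otimes\A$, is finite-dimensional.

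Next I would identify sesquilinear forms with matrices using $\psi$. Every sesquilinear form on $\A\otimes\A$ is of the shape
\[
\sip{v,w}_X := \psi(w)^*\, X\, \psi(v)
\]
for a unique $m^2\times m^2$ matrix $X$; the map $(v,w)\mapsto\sip{v,w}_X$ is linear in $v$, antilinear in $w$, and positive semidefinite if and only if $X$ is a positive matrix. So the problem reduces to finding a positive $X$ for which $\sip{\cdot,\cdot}_X$ makes $\A\otimes\A$ a $*$-bimodule and takes the prescribed values on elementary tensors $A\otimes 1$.

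Finally I would translate each of these conditions into linear equations in $X$. By sesquilinearity of $\sip{\cdot,\cdot}_X$ in both slots and linearity of the bimodule actions, it suffices to verify the $*$-bimodule identities on basis elements. Writing $A=Q_{i_1}$, $x=Q_{i_2}\otimes Q_{i_3}$, and letting $y$ run over a basis of $\A\otimes\A$ --- which may be parametrised as $Q_{i_4}^{\,*}\otimes Q_{i_5}^{\,*}$ since $\{Q_i^{\,*}\}$ is again a basis of $\A$ --- the left $*$-identity $\sip{Ax,y}=\sip{x,A^*y}$ becomes precisely the first family in~\eqref{eq-sys-lin-eqs}, the right $*$-identity $\sip{xA,y}=\sip{x,yA^*}$ yields the second, and the condition $\sip{Q_{i_1}\otimes 1,Q_{i_2}^{\,*}\otimes 1}=f(Q_{i_1},Q_{i_2}^{\,*})$ yields the third. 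Hence a positive matrix $X$ solving~\eqref{eq-sys-lin-eqs} is exactly the data of a positive sesquilinear form on $\A\otimes\A$ with all the properties demanded by Corollary~\ref{corr-gen-deriv-f}, giving the equivalence of (1) and (2). The proof is essentially bookkeeping; the only mildly subtle point is the reparametrisation $y=Q_{i_4}^{\,*}\otimes Q_{i_5}^{\,*}$, which is what makes the indexing of the test vectors on the two sides of the $*$-bimodule identity match cleanly.
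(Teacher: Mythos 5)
Your proposal is correct and follows essentially the same route as the paper's proof: reduce to choosing a positive sesquilinear form on the canonical bimodule $\A\otimes\A$, encode it as a positive matrix $X$ via $\sip{v,w}=\psi(w)^*X\psi(v)$, and use multilinearity (with the second slot conjugated, so that testing against $Q_{i_4}^*\otimes Q_{i_5}^*$ suffices) to reduce the $*$-bimodule identities and the prescribed values on $\A\otimes 1$ to the finite linear system \eqref{eq-sys-lin-eqs}. The only cosmetic difference is that you invoke Corollary \ref{corr-gen-deriv-f} where the paper cites Theorem \ref{thm-gen-deriv-form} directly, which changes nothing of substance.
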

\begin{proof}
	By Theorem \ref{thm-gen-deriv-form} we know that we can assume without loss of generality that our derivation $\delta$ will have the form $\delta(A)=A\otimes 1$ for all $A\in\A$. This means that we already have a derivation from $\A$ to $\A\otimes \A$, so the positive sesquilinear form $\sip{\cdot,\cdot}_{\A\otimes\A}$ is all that is left to choose. The idea is to write down linear equations that capture the properties that we require of the positive sesquilinear form. These properties are:
	\begin{enumerate}[(i)]
		\item The left multiplication is a $*$-homomorphism, i.e. $\sip{A(B\otimes C),D\otimes E}_{\A\otimes\A}=\sip{B\otimes C,A^*(D\otimes E)}_{\A\otimes\A}$ for all $A,B,C,D,E\in\A$.
		\item The right multiplication is a $*$-homomorphism, i.e. $\sip{(B\otimes C)A,D\otimes E}_{\A\otimes\A}=\sip{B\otimes C,(D\otimes E)A^*}_{\A\otimes\A}$ for all $A,B,C,D,E\in\A$.
		\item The positive sesquilinear form takes the required values on the range of $\delta$, i.e. $\sip{\delta(A),\delta(B)}_{\A\otimes\A}=f(A,B)$ for all $A,B\in\A$.
	\end{enumerate}
	Any positive sesquilinear form $\sip{\cdot,\cdot}$ on $\C^{m^2}$ can be represented by a positive $m^2\times m^2$ matrix $X$ by the relation
	\begin{equation*}
		\sip{v,w}=w^*Xv,
	\end{equation*}
	if we view $v,w\in\C^{m^2}$ as column vectors. We can therefore try to describe the positive sesquilinear form by finding linear equations for the matrix $X$ such that for all $A,B,C,D\in\A$ we have
	\begin{equation*}
		\sip{A\otimes B,C\otimes D}_{\A\otimes\A}=\psi(C\otimes D)^*X\psi(A\otimes B).
	\end{equation*}
	
	We will start with the left multiplication. Consider the map $\phi_{\mathrm{l}}:\A^5\rightarrow\C$ given by
	\begin{equation*}
		\phi_{\mathrm{l}}(A,B,C,D,E)=\sip{A(B\otimes C),D^*\otimes E^*}_{\A\otimes\A}-\sip{B\otimes C,A^*(D^*\otimes E^*)}_{\A\otimes\A}.
	\end{equation*}
	We know that the left multiplication is a $*$-homomorphism if and only if $\phi_{\mathrm{l}}$ is the zero funciton. In fact, since $\phi_{\mathrm{l}}$ is linear in each of the five coordinates, we know that $\phi_{\mathrm{l}}$ is the zero function if and only if $\phi_{\mathrm{l}}(A,B,C,D,E)=0$ for all combinations $A,B,C,D,E\in \Q$. Because $\A$ is finite-dimensional, this condition only imposes a finite number of conditions on the positive sesquilinear form. Converting $\phi_{\mathrm{l}}(A,B,C,D,E)=0$ to an equation containing $X$, we see that we obtain the set of equations
	\begin{equation*}
		R_{\mathrm{l}}=\{\psi(Q_{i_4}^*\otimes Q_{i_5}^*)^*X\psi(Q_{i_1}(Q_{i_2}\otimes Q_{i_3}))-\psi(Q_{i_1}^*(Q_{i_4}^*\otimes Q_{i_5}^*))^*X\psi(Q_{i_2}\otimes Q_{i_3})=0|1\leq i_1,\dots,i_5\leq m\}.
	\end{equation*}
	So we know that the positive sesquilinear form given by $X$ turns the left multiplication into a $*$-homomorphism if and only if all equations in $R_{\mathrm{l}}$ hold.	
	Analogously, we find that the positive sesquilinear form by $X$ turns the right multiplication into a $*$-homomorphism if and only if all equations in
	\begin{equation*}
		R_{\mathrm{r}}=\{\psi(Q_{i_4}^*\otimes Q_{i_5}^*)^*X\psi((Q_{i_2}\otimes Q_{i_3})Q_{i_1})-\psi((Q_{i_4}^*\otimes Q_{i_5}^*)Q_{i_1}^*)^*X\psi(Q_{i_2}\otimes Q_{i_3})=0|1\leq i_1,\dots,i_5\leq m\}
	\end{equation*}
	hold.
	
	This leaves property (iii), for which we look at the map $\phi_{\mathrm{f}}:\A^2\rightarrow \C$ given by
	\begin{equation*}
		\phi_{\mathrm{f}}(A,B)=\sip{A\otimes 1,B^*\otimes 1}-f(A,B^*).
	\end{equation*}
	This is once again a linear map in both coordinates, and property (iii) holds if and only if $\phi_{\mathrm{f}}$ is the zero function. By the same reasoning as before, we find that property (iii) holds if and only if $\phi_{\mathrm{f}}(A,B)=0$ for all pairs $A,B\in\Q$. Translated to $X$ this means that property (iii) holds if and only if all equations in
	\begin{equation*}
		R_{\mathrm{f}}=\{\psi(Q_{i_2}^*\otimes 1)^*X\psi(Q_{i_1}\otimes 1)-f(Q_{i_1},Q_{i_2}^*)=0|1\leq i_1,i_2\leq m\}
	\end{equation*}
	hold.
	
	Combining all these equations, we find that there exists a positive sesquilinear form on $\A\otimes \A$ satisfying (i), (ii) and (iii) if and only if there exists a positive matrix $X$ that is a simultaneous solution of all equations in $R_{\mathrm{l}}$, $R_{\mathrm{r}}$ and $R_{\mathrm{f}}$.
\end{proof}
\begin{corollary} \label{corr-lin-sys-eqs}
	Let $\A$ be a unital finite-dimensional $*$-algebra with dimension $m$ and let $f:\A\times\A\rightarrow \C$ be a sesquilinear map. Fix an isomorphism $\psi:\A\otimes\A\rightarrow \C^{m^2}$ and a basis $\Q=\{Q_1,\dots Q_m\}$ of $\A$. If condition 1 of Theorem \ref{thm-sys-lin-eqs} is not satisfied, then no Hilbert bimodule $H$ with unital right action and derivation $\delta:\A\rightarrow H$ exist such that $\sip{\delta(A),\delta(B)}_H=f(A,B)$.
\end{corollary}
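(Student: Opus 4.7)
The plan is to deduce this corollary as an immediate consequence of the preceding results, specifically Theorem \ref{thm-sys-lin-eqs} combined with Corollary \ref{corr-gen-deriv-f}, using a contrapositive argument. No new mathematical content is required.

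First, I would observe that Theorem \ref{thm-sys-lin-eqs} establishes the equivalence of condition 1 (existence of a positive $m^2 \times m^2$ matrix $X$ solving the linear system) with condition 2 (existence of a positive sesquilinear form $\sip{\cdot,\cdot}_{\A\otimes\A}$ on the canonical bimodule $\A\otimes\A$ that makes it a $*$-bimodule and satisfies $\sip{A\otimes 1, B\otimes 1}_{\A\otimes\A} = f(A,B)$ for all $A,B\in\A$). Hence if condition 1 fails, no such positive sesquilinear form on $\A\otimes\A$ exists.

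Next, I would invoke Corollary \ref{corr-gen-deriv-f}, which asserts that the existence of a derivation $\delta:\A\rightarrow H$ to a Hilbert $\A-\A$ bimodule with unital right action satisfying $\sip{\delta(A),\delta(B)}_H=f(A,B)$ is equivalent to the existence of a positive sesquilinear form on the canonical bimodule $\A\otimes\A$ with exactly the properties appearing in condition 2 of Theorem \ref{thm-sys-lin-eqs}. Taking the contrapositive and chaining through the equivalences, failure of condition 1 forces failure of condition 2 of Theorem \ref{thm-sys-lin-eqs}, which in turn precludes the existence of any such derivation $\delta$ into any Hilbert bimodule with unital right action.

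The only mild subtlety, and hence the only real step to justify, is that the hypothesis in Corollary \ref{corr-gen-deriv-f} of \emph{unital right action} matches the hypothesis of the present corollary, so the application is direct; for unital $\A$ this coincides with the nondegenerate right action hypothesis of Theorem \ref{thm-gen-deriv-form} as noted in the remark following its statement. There is no genuine obstacle to overcome — the work has already been done in the theorem — so the proof is essentially a one-line chain of implications.
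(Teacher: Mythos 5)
Your proposal is correct and follows exactly the paper's own (one-line) proof: the paper also deduces the corollary immediately from Corollary \ref{corr-gen-deriv-f} and Theorem \ref{thm-sys-lin-eqs}, and your expanded contrapositive chain simply spells out that implication. Nothing is missing.
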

\begin{proof}
	This is immediate from Corollary \ref{corr-gen-deriv-f} and Theorem \ref{thm-sys-lin-eqs}.
\end{proof}

\section{Existence of a square root of a generator of a QMS} \label{sec-generators}
The developed theory will allow us to tackle Problem \ref{problem-concr}. The formulation in the introduction requires the use of Tomita-Takesaki theory to understand the modular automorphism group, which can be found in \cite{Takesaki2003II}. However, if $\A$ is a finite-dimensional C*-algebra, the modular automorphism group for a faithful state $\rho$ is given by 
\begin{equation*}
\sigma_t^{\rho}(A)=d_{\rho}^{it}Ad_{\rho}^{-it}
\end{equation*}
for all $A\in\A,t\in \C$, where $d_{\rho}$ is the density matrix corresponding to $\rho$. Consequently, the inner products $\sip{\cdot,\cdot}_s$ on $\A$ are then given by
\begin{equation*}
	\sip{A,B}_s=\tau(d_{\rho}^{1-s}B^*d_{\rho}^sA),
\end{equation*}
where $\tau$ is the normalised trace on $\A$. For a fixed $s\in[0,1]$ and using the notation above, we then want to know whether, given a finite-dimensional C*-algebra $\A$, a faithful state $\rho$ on $\A$ and a generator $L$ of a QMS such that for all $A,B\in\A$: $\sip{L(A),B}_0=\sip{A,L(B)}_0$, we can find a $\A-\A$ $*$-bimodule $H$ with positive sesquilinear form $\sip{\cdot,\cdot}_H$ and a derivation $\delta:\A\rightarrow H$ such that $\tau(d_{\rho}^{1-s}B^*d_{\rho}^sL(A))=\sip{\delta(A),\delta(B)}_H$ for all $A,B\in\A$.

Using concrete examples for the case that $\rho$ is not tracial, we will show that this is sometimes, but not always, possible for both $s=0$ and $s=\frac{1}{2}$. To obtain these concrete generators, we need part of a theorem stated by Carlen and Maas \cite[Theorem 3.1]{CARLEN20171810} and originally proven by Alicki \cite{Alicki1976}. This theorem was based on the description of generators of QMSs by Lindblad \cite{Lindblad1976} and more directly on the work of Gorini, Kossakowski and Sudarshan \cite{Gorini1976}.
\begin{theorem} \label{thm-CM}
	Let $\rho$ be a faithful state on $\M{n}{C}$. Let $\mathcal{J}$ be a finite index set and let $\{V_j\}_{j\in\mathcal{J}}\subset\M{n}{C}$ and $\{\omega_j\}_{j\in\mathcal{J}}\subset\R$ be such that
	\begin{equation} \label{eq-CM-cond}
		\{V_j\}_{j\in\mathcal{J}}=\{V_j^*\}_{j\in\mathcal{J}}\text{ and } \sigma_{-i}^{\rho}(V_j)=e^{-\omega_j}V_j.
	\end{equation}
	Then the operator $L:\M{n}{C}\rightarrow\M{n}{C}$, given by
	\begin{equation*}
		L(A)=-\sum_{j\in\mathcal{J}}e^{-\frac{\omega_j}{2}}\left(V_j^*[A,V_j]+[V_j^*,A]V_j\right),
	\end{equation*}
	is a generator of a QMS that is $\rho$-symmetric. Conversely, any generator of a QMS on $\M{n}{C}$ that is $\rho$-symmetric is of the above form.
\end{theorem}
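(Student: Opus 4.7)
The proof naturally splits into the sufficiency and necessity directions, and I would use the Gorini–Kossakowski–Sudarshan (GKS) classification of Markov generators on $\M{n}{C}$ as the main tool. For sufficiency, I would first expand the commutators to recognise $L$ as an operator of GKS form,
\begin{equation*}
L(A)=\sum_{j\in\mJ}e^{-\omega_j/2}\left(V_j^*V_jA+AV_j^*V_j-2V_j^*AV_j\right),
\end{equation*}
with trivial Hamiltonian part and strictly positive coefficients, so that the GKS theorem immediately yields complete positivity of $e^{-tL}$, while $L(1)=0$ (from $[\cdot,1]=0$) supplies Markovianity. The remaining verification of GNS-symmetry reduces to computing $\sip{L(A),B}_0-\sip{A,L(B)}_0=\tau(d_{\rho}B^*L(A))-\tau(d_{\rho}L(B)^*A)$ term by term, using the cyclicity of the trace together with the eigenvalue relation $d_{\rho}V_jd_{\rho}^{-1}=e^{-\omega_j}V_j$; the $*$-closure $\{V_j\}=\{V_j^*\}$ pairs each $j$ with an index $j^*$ satisfying $V_{j^*}=V_j^*$ and $\omega_{j^*}=-\omega_j$, and the paired contributions cancel.

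For necessity, I would start from an arbitrary $\rho$-symmetric generator $L$ and invoke GKS to obtain a representation
\begin{equation*}
L(A)=i[H,A]+\sum_{k,l}c_{kl}\left(V_l^*V_kA+AV_l^*V_k-2V_l^*AV_k\right)
\end{equation*}
with $(c_{kl})$ positive semidefinite. The plan is to exploit the gauge freedom in this representation to choose $\{V_k\}$ as an orthonormal (in the Hilbert–Schmidt inner product) basis of eigenvectors of the modular automorphism $\sigma_{-i}^{\rho}$, and then translate GNS-symmetry into two constraints: the Hamiltonian part $H$ must be scalar (hence contributes nothing to $L$), and $(c_{kl})$ must be block-diagonal with respect to the modular eigenvalue decomposition. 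Diagonalising $(c_{kl})$ within each modular eigenspace and absorbing the resulting coefficients into a relabelling of the $V_k$ that is closed under $*$ produces the advertised form with weights $e^{-\omega_j/2}$.

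The main obstacle is the bookkeeping in the necessity direction: one must simultaneously arrange the $*$-closure $\{V_j\}=\{V_j^*\}$ and the spectral decomposition of $(c_{kl})$ inside modular eigenspaces, and verify that GNS-symmetry (strictly stronger than KMS-symmetry, as emphasised in the remark after Problem~\ref{problem-concr}) is precisely what is needed to force the Hamiltonian contribution to vanish. Once these points are settled, identifying the final coefficient as $e^{-\omega_j/2}$ is a matter of a scalar redefinition of the $V_j$.
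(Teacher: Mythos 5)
The paper does not actually prove this theorem: it imports it verbatim from Carlen and Maas \cite[Theorem 3.1]{CARLEN20171810}, who attribute it to Alicki \cite{Alicki1976}, so there is no in-paper argument to compare yours against. Judged on its own, your sufficiency direction is correct and essentially complete. The expansion into the form $\sum_j e^{-\omega_j/2}(V_j^*V_jA+AV_j^*V_j-2V_j^*AV_j)$ with positive coefficients and no Hamiltonian part gives complete positivity of $e^{-tL}$ by GKS/Lindblad, $L(1)=0$ gives unitality, and the GNS-symmetry check does go through by cyclicity: from $d_{\rho}V_j=e^{-\omega_j}V_jd_{\rho}$ one gets $[V_j^*V_j,d_{\rho}]=0$, which handles the sandwich-free terms, and the cross terms $\tau(d_{\rho}B^*V_j^*AV_j)$ pick up a factor $e^{\omega_j}$ that is absorbed by reindexing $j\mapsto j^*$ with $V_{j^*}=V_j^*$ and $\omega_{j^*}=-\omega_j$. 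Two small points: the paired contributions \emph{match} rather than cancel, and you should note that $\omega_{j^*}=-\omega_j$ is forced by the hypotheses (for $V_j\neq 0$ the relation $\sigma_{-i}^{\rho}(V_j)=e^{-\omega_j}V_j$ determines $\omega_j$, and conjugating gives $\sigma_{-i}^{\rho}(V_j^*)=e^{\omega_j}V_j^*$), with a harmless caveat about multiplicities when the $V_j$ repeat.

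The necessity direction, however, is a plan rather than a proof, and the steps you defer as ``bookkeeping'' are exactly where the content of Alicki's theorem lies. First, the decomposition of a GKS generator into a Hamiltonian part $i[H,\cdot]$ and a dissipative part is not unique, so ``$H$ must be scalar'' is not a statement you can extract directly from GNS-symmetry without first fixing the gauge; the standard route is to show that GNS-symmetry implies commutation with the modular group (as the paper's remark notes via \cite{Wirth2022Christ}), decompose everything over the eigenspaces of $\sigma_{-i}^{\rho}$, and only then argue that the skew-adjoint Hamiltonian contribution must vanish against the self-adjointness of $L$ in $\sip{\cdot,\cdot}_0$. Second, the claim that $(c_{kl})$ is block-diagonal with respect to the modular eigenvalue decomposition, and that one can diagonalise within blocks while simultaneously preserving the $*$-closure $\{V_j\}=\{V_j^*\}$ and producing the specific weights $e^{-\omega_j/2}$, requires a genuine computation relating $c_{kl}$ to $c_{l^*k^*}$; none of this is carried out. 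As it stands the converse half of the theorem is asserted, not proved, so you should either supply these arguments or do what the paper does and cite \cite{CARLEN20171810} and \cite{Alicki1976} for them.
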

We can now give the examples that prove the claim that the desired derivation sometimes, but not always, exists. Because of Theorem \ref{thm-CM}, we can construct a real generator $L$ of a QMS that is self-adjoint with respect to the GNS inner product by choosing an $n\in\N$, a faithful state $\rho$ on $\M{n}{C}$, an index set $\mathcal{J}$ and two sets $\{V_j\}_{j\in\mathcal{J}}\subset\M{n}{C}$ and $\{\omega_j\}_{j\in\mathcal{J}}\subset\R$ that satisfy Equation \ref{eq-CM-cond}. To use Corollary \ref{corr-lin-sys-eqs} or Theorem \ref{thm-sys-lin-eqs} we also need to choose an isomorphism $\psi:\M{n}{C}\otimes \M{n}{C}\rightarrow \C^{n^4}$ and a basis $\Q$ for $\M{n}{C}$. For $f:\A\times\A\rightarrow\C$ we can then pick $f(A,B)=\rho(B^*L(A))$ if we consider the GNS inner product on $\M{n}{C}$ or $f(A,B)=\rho(\sigma_{\frac{i}{2}}^{\rho}(b^*)a)$ if we look at the KMS inner product. We then find the system of linear equations \ref{eq-sys-lin-eqs}, which we solve using Mathematica. All computations using Mathematica are symbolic. We will numerically evaluate some eigenvalues, but solely to provide some feeling for the results. The code that was used can be found at \url{www.doi.org/10.4121/19323878}.

\begin{example}[Generator with derivation as square root]\label{examp-2x2}
	For this example we pick
	\begin{align*}
		n=2,\ \mathcal{J}=\{1,2\},\ e^{-\omega_1}=\frac{\pi+1}{\pi-1}, e^{-\omega_2}=\frac{\pi-1}{\pi+1},\ \psi(E_{ij}&\otimes E_{kl})=e_{8i+4k+2j+l-14},\ \Q=\{E_{ij}|1\leq i,j\leq 2\},\\
		\rho(A)=\frac{1}{2}\mathrm{Tr}\left(\begin{pmatrix}
		1+\frac{1}{\pi}&0\\0&1-\frac{1}{\pi}
		\end{pmatrix}A\right),&\ V_1=\begin{pmatrix}
			0&1\\0&0
		\end{pmatrix}\text{ and }\ V_2=V_1^*
	\end{align*}
	and we consider the GNS inner product on $\M{2}{C}$. If we pick $f:\A\times\A\rightarrow\C: f(A,B)=\rho(B^*L(A))$, then we can concretely check statement (1) of Theorem \ref{thm-sys-lin-eqs} using Mathematica. We obtain a self-adjoint matrix $X$ that is a solution of the system of equations and whose eigenvalues can be expressed algebraically in terms of $\pi$ and are all non-negative. They are approximately given by
	\begin{equation*}
		\{1.96, 1.96, 0.96, 0.96, 0.64, 0.64, 0.31, 
0.31, 0.07, 0.07, 0.07, 0.07, 0, 0, 0, 0\},
	\end{equation*}
	showing that $X$ is indeed positive. By Theorem \ref{thm-sys-lin-eqs} this shows that there exists a derivation $\delta$ from $\M{2}{C}$ to a $*$-bimodule such that $L=\delta^*\circ\delta$ with respect to the GNS inner product on $\M{2}{C}$.
	
	Alternatively, we can consider the KMS inner product on $\M{2}{C}$ and consequently replace $f$ by $f(A,B)=\rho(\sigma_{\frac{i}{2}}^{\rho}(B^*)L(A))$. In this case we also find a self-adjoint matrix $X$ that solves Equations \ref{eq-sys-lin-eqs} and whose eigenvalues can be expressed algebraically in terms of $\pi$ and are all non-negative. Approximately, the eigenvalues are given by
	\begin{equation*}
		\{1.44, 1.44, 1.44, 1.44, 0.47, 0.47, 0.47, 
0.47, 0.03, 0.03, 0.03, 0.03, 0, 0, 0, 0\}.
	\end{equation*}
	So Theorem \ref{thm-sys-lin-eqs} shows that we can also find a derivation $\delta$ from $\M{2}{C}$ to a $*$-bimodule such that $L=\delta^* \circ \delta$ with respect to the KMS inner product on $\M{2}{C}$.
\end{example}

\begin{example}[Generator without derivation as square root]\label{examp-3x3}
	For this example we pick
	\begin{align*}
		n=3,\ \mathcal{J}=\{1,2\},\ e^{-\omega_1}=\frac{\pi^2}{e^2}, e^{-\omega_2}=\frac{e^2}{\pi^2},\ \psi(E_{ij}&\otimes E_{kl})=e_{27i+9k+3j+l-39},\ \Q=\{E_{ij}|1\leq i,j\leq 3\},\\
		\rho(A)=\frac{1}{1+\pi^2+e^2}\mathrm{Tr}\left(\begin{pmatrix}
		1&0&0\\0&\pi^2&0\\0&0&e^2
		\end{pmatrix}A\right),&\ V_1=\begin{pmatrix}
			0&0&0\\
			0&0&1\\0&0&0
		\end{pmatrix}\text{ and }\ V_2=V_1^*.
	\end{align*}
	If we pick $f:\A\times\A\rightarrow\C: f(A,B)=\rho(B^*L(A))$, then we can concretely check statement (1) of Theorem \ref{thm-sys-lin-eqs} using Mathematica. We find that there does not exist any matrix $X$ that satisfies the system of equations. Using Corollary \ref{corr-lin-sys-eqs} we can now conclude that there do not exist an $\A-\A$ $*$-bimodule $H$ and a derivation $\delta:\A\rightarrow H$ such that $L=\delta^*\circ\delta$ with respect to the GNS inner product on $\A$.
	
	Alternatively, if we consider the KMS inner product on $\A$ and therefore use $f(A,B)=\rho(\sigma_{\frac{i}{2}}^{\rho}(B^*)L(A))$, then we do find a subspace $\mathcal{X}$ of matrices that satsify Equations \ref{eq-sys-lin-eqs}. However, for any matrix $X\in\mathcal{X}$ we have that 
	\begin{equation*}
		\psi(E_{12}\otimes E_{22}+E_{13}\otimes E_{32})^*X\psi(E_{12}\otimes E_{22}+E_{13}\otimes E_{32})=-\frac{e^2+2e(\pi-1)+\pi(\pi-2)}{1+\pi^2+e^2}<0.
	\end{equation*}
	Consequently, we see that $X$ is not positive definite, and therefore that the corresponding sesquilinear form on $\A\otimes \A$ is not positive. Since this holds for any $X\in\mathcal{X}$, we conclude that that there do not exist an $\A-\A$ $*$-bimodule $H$ and a derivation $\delta:\A\rightarrow H$ such that $L=\delta^*\circ\delta$ with respect to the KMS inner product on $\A$.  
\end{example}

\begin{example}\label{examp-detailed}
Let us consider the $\M{3}{C}$ case and a state $\rho$ given by
\begin{equation*}
	\rho(A)=\frac{1}{\lambda_1^2+\lambda_2^2+\lambda_3^2}\Tr\left(\begin{pmatrix}
		\lambda_1^2&0&0\\
		0&\lambda_2^2&0\\
		0&0&\lambda_3^2
	\end{pmatrix}A\right)
\end{equation*}
for some $\lambda_i>0$. Without loss of generality we can assume that $\lambda_1=1$. First we note that choosing $V=\diag(a,b,c)$ with $a,b,c\in\R$ (and $J$ a singleton set) in Theorem \ref{thm-CM} gives the same generator as choosing 
\begin{align*}
	V_1&=\sqrt{\frac{1}{2}((a-b)^2+(a-c)^2-(b-c)^2)}\diag(1,0,0),\\
	V_2&=\sqrt{\frac{1}{2}((a-b)^2+(b-c)^2-(a-c)^2)}\diag(0,1,0) \text{ and }\\
	V_3&=\sqrt{\frac{1}{2}((a-c)^2+(b-c)^2-(a-b)^2)}\diag(0,0,1). 
\end{align*}
Consequently, if all ratios of the $\lambda_i$ are different and unequal to 1, then by Theorem \ref{thm-CM} any generator of a QMS that is self-adjoint with respect to the GNS inner product can be obtained by using the collection of $V_j$'s given by $\{\sqrt{Y_{ij}}E_{ij}|1\leq i,j\leq 3\}$ for some symmetric matrix $Y\in M_{3\times 3}(\mathds{R}_{\geq 0})$. 

Let $\omega_{kl}$ be such that
\begin{equation*}
	e^{-\omega_{kl}}E_{kl}=\sigma_{-i}^{\rho}(E_{kl})=\frac{\lambda_k^2}{\lambda_l^2}E_{kl}.
\end{equation*}
We will now consider a slightly more general situation than the generators of $\rho$-symmetric QMSs. For any symmetric $Y$ in $\M{3}{R}$ define the operator $L_Y:\M{3}{C}\rightarrow \M{3}{C}$ by
\begin{equation*}
	L_Y(A)=-\sum_{i,j=1}^3Y_{ij}e^{-\frac{\omega_{ij}}{2}}\left(E_{ij}^*[A,E_{ij}]+[E_{ij}^*,A]E_{ij}\right).
\end{equation*}
Note that any generator of a QMS that is $\rho$-symmetric can be obtained by choosing the appropriate $Y$. In order to use Theorem \ref{thm-sys-lin-eqs} and Corollary \ref{corr-lin-sys-eqs} we pick the basis $\Q=\{E_{ij}|1\leq i,j\leq 3\}$ of $\M{3}{C}$ and the isomorphism $\psi:\M{3}{C}\otimes \M{3}{C}\rightarrow \C^{3^4}:\psi(E_{ij}\otimes E_{kl})=e_{27i+9k+3j+l-39}$, where the $e_i$ form the standard basis of $\C^{81}$, and we fix the explicit state $\rho$ by choosing $\lambda_2=\pi$ and $\lambda_3=e^{\pi}$. Lastly, we define the function $f_Y:\M{3}{C}\times\M{3}{C}\rightarrow \C:f_Y(A,B)=\rho(B^*L_Y(A))$. Now Equations \ref{eq-sys-lin-eqs} are linear in the pair $(X,Y)$. Consequently, we can solve the system of linear equations to find which conditions on $Y$ must hold to make sure that an $X$ exists that satisfies the equations. Solving this system of linear equations with Mathematica we find that such an $X$ exists if and only if

\begin{align}\label{eq-sol-exists}
\frac{(1-\lambda_3^2-\lambda_2^2)(\lambda_3^2-\lambda_2^2)}{\lambda_3\lambda_2}Y_{23}+\frac{(\lambda_3^2-1-\lambda_2^2)(\lambda_2^2-1)}{\lambda_2}&Y_{12}
+\frac{(\lambda_2^2-1-\lambda_3^2)(1-\lambda_3^2)}{\lambda_3}Y_{13}\nonumber\\
&+(\lambda_3^2-\lambda_2^2)Y_{11}+(1-\lambda_3^2)Y_{22}+(\lambda_2^2-1)Y_{33}=0
\end{align}

with $\lambda_2=\pi$ and $\lambda_3=e^{\pi}$. These calculations have been executed for a specific state $\rho$. However, due to our choice of $\rho$, using the fact that $\pi$ and $e^{\pi}$ are algebraically independent \cite{Nesterenko1996}, we can obtain some more general conclusions.
\begin{prop}\label{prop-P}
	There exists a set $P\subset\R_{> 0}\times \R_{>0}$ such that 
\begin{equation*}
	|\{x\in\R_{>0}:|\{y\in\R_{>0}|(x,y)\in P \text{ or } (y,x)\in P\}|=\infty\}|<\infty
\end{equation*}
and for all $(\lambda_2,\lambda_3)\in(\R_{>0}\times \R_{>0})\backslash P$ and symmetric $Y\in\M{3}{R}$ the following are equivalent:
\begin{enumerate}[(i)]
	\item There exists a solution $X$ of Equations \ref{eq-sys-lin-eqs} with the function $f=f_Y$.
	\item $Y$ satisfies Equation \ref{eq-sol-exists}.
\end{enumerate}
\end{prop}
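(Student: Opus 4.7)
The plan is to treat Equations \ref{eq-sys-lin-eqs} with $f=f_Y$ as a parametrised linear system $M(\lambda_2,\lambda_3)X=b(\lambda_2,\lambda_3,Y)$, where the entries of $M$ and $b$ are polynomials in $\lambda_2^{\pm 1}$ and $\lambda_3^{\pm 1}$, and where $b$ is linear in the six independent entries of the symmetric matrix $Y$. Solvability in $X$ is equivalent to $b$ lying in the range of $M$, which is cut out by pairing $b$ with any basis of the left kernel of $M$. This translates into a finite list of linear equations in the entries of $Y$ whose coefficients are rational functions of $\lambda_2,\lambda_3$.

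I would first carry out this cokernel computation symbolically over the function field $\mathbb{Q}(\lambda_2,\lambda_3)$, obtaining a definite list of independent compatibility equations. Using the algebraic independence of $\pi$ and $e^{\pi}$ over $\mathbb{Q}$ \cite{Nesterenko1996}, the rank of $M(\lambda_2,\lambda_3)$ at the point $(\pi,e^{\pi})$ must equal its generic rank: any minor of $M$ that is not identically zero is a nonzero polynomial in $\lambda_2,\lambda_3$ with rational coefficients, and therefore does not vanish at $(\pi,e^{\pi})$. Consequently, the dimension of the cokernel at $(\pi,e^{\pi})$ equals its generic dimension, and the Mathematica computation at this point furnishes a basis of the generic cokernel. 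Since Mathematica produced a single compatibility equation, the generic cokernel is one-dimensional, and equation \ref{eq-sol-exists}, read as an identity with $\lambda_2,\lambda_3$ as formal variables, describes the generic compatibility condition: any discrepancy between its coefficients and those of the true generic equation would manifest as a nontrivial element of $\mathbb{Q}[\lambda_2,\lambda_3]$ vanishing at $(\pi,e^{\pi})$, contradicting algebraic independence.

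I would then define $P\subset\mathbb{R}_{>0}^2$ as the union of the real zero sets of the finitely many polynomials in $\lambda_2,\lambda_3$ that could cause the symbolic description to fail: the denominators appearing in the symbolic reduction together with the pivot minors used to identify the generic rank of $M$. This is a proper real algebraic subset of $\mathbb{R}_{>0}^2$, so it decomposes into finitely many irreducible real algebraic components of dimension $\leq 1$. Only its vertical line components $\{x_0\}\times\mathbb{R}_{>0}$ or horizontal line components $\mathbb{R}_{>0}\times\{y_0\}$ can contribute an $x$-value above which infinitely many $y$ lie in $P$ (in either of the two orientations appearing in the proposition); since only finitely many such line components occur, the required thinness condition on $P$ is satisfied. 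For $(\lambda_2,\lambda_3)\notin P$ the symbolic analysis specialises validly at $(\lambda_2,\lambda_3)$, and the single compatibility equation becomes exactly equation \ref{eq-sol-exists}, which gives the equivalence (i)$\Leftrightarrow$(ii).

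The main obstacle is not any particular computation but the conceptual step of transferring the numerical Mathematica result at the single pair $(\pi,e^{\pi})$ to a statement about generic $(\lambda_2,\lambda_3)$. Nesterenko's theorem is exactly what makes this transfer automatic, since any accidental cancellation in the coefficients or pivot minors at $(\pi,e^{\pi})$ would produce a nontrivial polynomial relation between $\pi$ and $e^{\pi}$; the remainder of the argument is straightforward linear algebra together with the standard structure theory of real algebraic subsets of the plane.
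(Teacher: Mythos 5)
Your proposal is correct and rests on the same two pillars as the paper's own proof: the solvability conditions in $X$ reduce to finitely many equations that are linear in $Y$ with coefficients that are rational functions of $(\lambda_2,\lambda_3)$ over $\mathds{Q}$, and Nesterenko's algebraic independence of $\pi$ and $e^{\pi}$ forces any such function vanishing at $(\pi,e^{\pi})$ to vanish identically, so the single Mathematica computation propagates to generic parameters; in both arguments $P$ is the zero set of finitely many not-identically-vanishing polynomials, which yields the required thinness. The organization differs in one genuine respect. You identify the generic compatibility condition with Equation \ref{eq-sol-exists} directly, arguing that any coefficient discrepancy would produce a polynomial relation between $\pi$ and $e^{\pi}$; this obliges you to track exactly which specializations preserve the one-dimensional row space of the compatibility map (your ``pivot minors''). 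Note in this connection that the coefficient matrix $M$ acting on $X$ is in fact constant --- only the right-hand side depends on $(\lambda_2,\lambda_3)$ and on $Y$ --- so what must be controlled is not the rank or cokernel of $M$ but the rank of the induced map sending $Y$ to the cokernel component of $b$; your phrasing conflates these, though the intended argument goes through. The paper instead splits the equivalence: it proves (ii)$\Rightarrow$(i) for \emph{all} $(\lambda_2,\lambda_3)$ by applying Lemma \ref{lem-rational-func} to a rational basis of the codimension-one subspace $\mathcal{M}_{\lambda_2,\lambda_3}$ of matrices satisfying \ref{eq-sol-exists}, and obtains (i)$\Rightarrow$(ii) outside $P$ by a dimension count: the solvable $Y$ form a linear subspace containing $\mathcal{M}_{\lambda_2,\lambda_3}$ which, outside the zero set of a single rational function not vanishing at $(\pi,e^{\pi})$, is proper and hence equals $\mathcal{M}_{\lambda_2,\lambda_3}$. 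That codimension argument avoids ever having to verify that the generic compatibility equation is literally \ref{eq-sol-exists}, whereas your route gives a more explicit structural picture at the price of heavier bookkeeping in the definition of $P$. Both are valid.
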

To prove this, we need a lemma.

\begin{lemma}\label{lem-rational-func}
	Let $m,n\in\N$, $A\in M_{n,m}(\C)$ and $b:\R\times \R\rightarrow \C^n$ be an entrywise rational function. Then there exists a finite family of rational functions $r_1,\dots,r_l$ in two variables such that for all $\lambda_1,\lambda_2\in\R$ the system of linear equations $Ax=b(\lambda_1,\lambda_2)$ with $x\in\C^m$ has a solution if and only if $r_i(\lambda_1,\lambda_2)=0$ for all $1\leq i\leq l$. 
\end{lemma}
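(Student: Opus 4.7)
The plan is to exploit the fact that only the right-hand side of the system $Ax = b(\lambda_1,\lambda_2)$ depends on the parameters, while the coefficient matrix $A$ is fixed. Hence solvability reduces to checking whether the vector $b(\lambda_1,\lambda_2)\in\C^n$ lies in the \emph{fixed} linear subspace $\mathrm{range}(A)\subseteq\C^n$, and this subspace is cut out by finitely many $\C$-linear equations.

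Concretely, let $r=\mathrm{rank}(A)$ and use Gaussian elimination to choose an invertible $n\times n$ matrix $P$ with complex entries (independent of $\lambda_1,\lambda_2$, since $A$ is) such that $PA$ is in row echelon form with its last $n-r$ rows identically zero. Multiplying $Ax=b$ on the left by the invertible matrix $P$ yields the equivalent system $(PA)x=Pb$, and this is solvable if and only if the last $n-r$ entries of $Pb$ vanish: the first $r$ rows of $PA$ are linearly independent, so the corresponding components of $Pb$ can always be matched by a suitable choice of $x$.

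Then I would define, for $1\leq i\leq n-r$,
\begin{equation*}
r_i(\lambda_1,\lambda_2)\ :=\ \bigl(P\,b(\lambda_1,\lambda_2)\bigr)_{r+i}.
\end{equation*}
Each such entry is a fixed $\C$-linear combination of the entries of $b(\lambda_1,\lambda_2)$, and the entries of $b$ are rational in $(\lambda_1,\lambda_2)$ by hypothesis; hence each $r_i$ is itself a rational function of two variables. By construction, $Ax=b(\lambda_1,\lambda_2)$ has a solution if and only if $r_i(\lambda_1,\lambda_2)=0$ for every $1\leq i\leq n-r$, giving the lemma with $l=n-r$.

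The argument is essentially routine linear algebra and presents no substantive obstacle; the only point worth emphasising in the write-up is that the reduction matrix $P$ can be taken independent of the parameters, so that passing from $b$ to the conditions $r_i=0$ introduces no additional parameter dependence and preserves rationality.
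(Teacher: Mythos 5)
Your argument is correct: since $A$ is fixed, solvability of $Ax=b(\lambda_1,\lambda_2)$ is just the condition $b(\lambda_1,\lambda_2)\in\mathrm{range}(A)$, and cutting out this fixed subspace by the last $n-r$ rows of $Pb$ for a parameter-independent invertible $P$ does exactly that; each resulting function is a fixed $\C$-linear combination of rational functions, hence rational. The paper reaches the same conclusion by a slightly different piece of elementary linear algebra: it invokes the Rouch\'e--Capelli theorem, so that solvability is equivalent to $\mathrm{rank}(A|b)=\mathrm{rank}(A)=k$, i.e.\ to the vanishing of all $(k+1)\times(k+1)$ minors of the augmented matrix, and takes those minors as the family $r_1,\dots,r_l$. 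The two routes are equally valid; yours has the mild advantage that the $r_i$ are \emph{linear} in the entries of $b$ (the paper's minors are determinants, hence of higher degree in those entries), while the paper's avoids exhibiting the matrix $P$ explicitly. Neither difference matters for the application in Proposition \ref{prop-P}, where only the existence of finitely many rational functions characterising solvability is used.
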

\begin{proof}
	Let $k$ be the rank of $A$. By the Rouché-Capelli theorem, we know that our system of linear equations has a solution if and only if $(A|b(\lambda_1,\lambda_2))$ also has rank $k$. Since the rank of $(A|b(\lambda_1,\lambda_2))$ is at least $k$, this is equivalent to the statement that all $(k+1)\times (k+1)$ minors are zero. Since all of the entries of $b(\lambda_1,\lambda_2)$ are rational functions in $\lambda_1$ and $\lambda_2$ and $A$ does not depend on $\lambda_1$ and $\lambda_2$, this means that all of these minors are rational functions in $\lambda_1$ and $\lambda_2$. Choosing these minors as our finite family of rational functions gives the desired result.
\end{proof}

\begin{proof}[Proof of Proposition \ref{prop-P}]
	Let $\Msymm{3}{R}=\{A\in\M{3}{R}|A=A^*\}$ and $\mathcal{M}_{\lambda_2,\lambda_3}\subset \Msymm{3}{R}$ be the subset of matrices satisfying Equation \ref{eq-sol-exists}. We can find a set $\mathcal{T}$ of functions from $\R_{>0}\times \R_{>0}$ to $\Msymm{3}{R}$ such that for all $\lambda_2,\lambda_3\in\R_{>0}$  we have that $\{T(\lambda_2,\lambda_3)|T\in\mathcal{T}\}$ is a basis for $\mathcal{M}_{\lambda_2,\lambda_3}$, and $T_{ij}$ is a rational function of $\lambda_2$ and $\lambda_3$ for all $1\leq i,j\leq 3$ and $T\in\mathcal{T}$. For each $T\in\mathcal{T}$ and $Q_i,Q_j\in\Q$ we have that $f_T(Q_i,Q_j)$ is a rational function in $\lambda_2$ and $\lambda_3$. By Lemma \ref{lem-rational-func} we now know that for all $T\in\mathcal{T}$ there exist a family of rational functions $\mathcal{R}_T$ such that for all $\lambda_2,\lambda_3\in\R_{>0}$: there exists a solution for Equations \ref{eq-sys-lin-eqs} with the function $f=f_{T(\lambda_2,\lambda_3)}$ if and only if $r(\lambda_2,\lambda_3)=0$ for all $r\in\mathcal{R}_T$. Consequently, by linearity, we find for all $\lambda_2,\lambda_3\in\R_{>0}$ that $r(\lambda_2,\lambda_3)=0$ for all $r\in\mathcal{R}=\bigcup_{T\in\mathcal{T}}\mathcal{R}_T$ if and only if for all $Y\in \mathcal{M}_{\lambda_2,\lambda_3}$ the system of linear equations \ref{eq-sys-lin-eqs} with $f=f_Y$ has a solution. We know that this holds for $\lambda_2=\pi$ and $\lambda_3=e^{\pi}$. Since these numbers are algebraically independent, we must have that the numerators of all of these rational functions are equal to the zero function. Therefore we conclude that (ii) implies (i) for all $\lambda_2,\lambda_3\in\R_{>0}$.
	
	For the other direction, we will show that (ii) is a necessary requirement for (i). We can analogously find a finite set of rational functions $\mathcal{R}'$ such that $r(\lambda_2,\lambda_3)=0$ for all $r\in\mathcal{R}'$ if and only if for all $Y\in \Msymm{3}{R}$ a solution exists to Equations \ref{eq-sys-lin-eqs} with $f=f_Y$. We know by the beginning of this example that the latter part is not true for $\lambda_2=\pi$ and $\lambda_3=e^{\pi}$, so there exists an $r\in \mathcal{R}'$ such that $r(\pi,e^{\pi})\neq 0$. Let $P$ be defined by $P=\{(\kappa_1,\kappa_2)\in\R_{>0}\times \R_{>0}|r(\kappa_1,\kappa_2)=0\}$. Then $P$ satisfies
	\begin{equation*}
		|\{x\in\R_{>0}:|\{y\in\R_{>0}|(x,y)\in P \text{ or } (y,x)\in P\}|=\infty\}|<\infty.
	\end{equation*}
	Now let $(\lambda_2,\lambda_3)\in (\R_{>0}\times \R_{>0})\backslash P$ be arbitrary. By definition of $P$ we know that there exists a $Y\in\Msymm{3}{R}$ such that Equations \ref{eq-sys-lin-eqs} for $f=f_Y$ do not have a solution. Since these equations are linear in $Y$, this means that the subspace of $Y\in\Msymm{3}{R}$ for which these equations have a solution has codimension greater or equal to 1 in $\Msymm{3}{R}$. But we already know that there exists a solution for all $Y\in\mathcal{M}_{\lambda_2,\lambda_3}$, and $\mathcal{M}_{\lambda_2,\lambda_3}$ has codimension 1 in $\Msymm{3}{R}$. All in all this shows that for all $(\lambda_2,\lambda_3)\in(\R_{>0}\times \R_{>0})\backslash P$ and $Y\in\Msymm{3}{R}$ a solution to Equations \ref{eq-sys-lin-eqs} with $f=f_Y$ exists if and only if $Y\in\mathcal{M}_{\lambda_2,\lambda_3}$. This proves the proposition.
\end{proof}

We conclude by returning to the setting of generators of QMSs. Using the claim and Corollary \ref{corr-lin-sys-eqs} we see that outside of the set $P$, which has measure zero, the corresponding states do not allow a generator $L_Y$, for a symmetric $Y$ with positive entries not satisfying Equation \ref{eq-sol-exists}, to be written as $\delta^*\circ\delta$ for some derivation $\delta$ to a $*$-bimodule. Note that we cannot use Theorem \ref{thm-sys-lin-eqs} to definitively conclude that such a derivation always exists if Equation \ref{eq-sol-exists} is satisfied, because we do not know if the condition that the solution matrix of the system of equations is positive, is satisfied.
\end{example}

\begin{remark}
	A superficial investigation into the $4\times 4$ matrix case indicates that one would find two linear equations, each similar to (\ref{eq-sol-exists}), that describe when a matrix $X$ exists that satisfies the conditions of Theorem \ref{thm-sys-lin-eqs}. Conversely, in the $2\times 2$ case, no equations appear; the matrix $X$ always exists, even though it is not always positive. This suggests that the number of equations increases with the size of the matrices and it gives some intuition as to why we were able to find examples in the $2\times 2$ case where the sought derivation does exist.
\end{remark}

\begin{remark}
	The line of reasoning in Example \ref{examp-detailed} does not work exclusively for the GNS inner product. Any generator given by Theorem \ref{thm-CM} is self-adjoint with respect to $\sip{\cdot,\cdot}_s$ for all $s\in [0,1]$ \cite[Theorem 2.9]{CARLEN20171810}. It is therefore possible to work out the above examples for the inner product $\sip{\cdot,\cdot}_s$ for any $s\in [0,1]$, which gives similar results as above for all values of $s$ that have been tried, apart from one exception. When one considers the $\M{3}{C}$ case for $s=\frac{1}{2}$, then one finds that there always exists a solution $X$ to Equations \ref{eq-sys-lin-eqs}, but that this $X$ is not always positive. Unfortunately, the system of equations does not depend linearly on $s$, so there does not seem to be a clear way to prove that $s=\frac{1}{2}$ is the only special value. However, for any fixed $s$ one can use the above method to investigate when a derivation exists.
\end{remark}
\printbibliography
\end{document}